\documentclass[12pt]{amsart}
\usepackage{amsfonts}
\usepackage{amsfonts,latexsym,rawfonts,amsmath,amssymb,amsthm}
\usepackage[plainpages=false]{hyperref}
\usepackage{graphicx}

\numberwithin{equation}{section}

\RequirePackage{color}
 \textwidth = 6.2 in
 \textheight = 8.25 in
 \oddsidemargin = 0.25 in
 \evensidemargin = 0.25 in
\voffset=-20pt
\theoremstyle{plain}

\newtheorem{theorem}{Theorem}[section]

\newtheorem{corollary}{Corollary}[section]
\newtheorem{definition}{Definition}[section]

\newtheorem{lemma}[theorem]{Lemma}
\newtheorem{proposition}[theorem]{Proposition}

\newtheorem{problem}[theorem]{Problem}

\newcommand{\beq}{\begin{equation}}
\newcommand{\eeq}{\end{equation}}
\newcommand{\beqs}{\begin{eqnarray*}}
\newcommand{\eeqs}{\end{eqnarray*}}
\newcommand{\beqn}{\begin{eqnarray}}
\newcommand{\eeqn}{\end{eqnarray}}
\newcommand{\beqa}{\begin{array}}
\newcommand{\eeqa}{\end{array}}

\def\phi{\varphi}

\begin{document}
\title[Prescribed curvature problem]{Convex hypersurfaces
of prescribed curvatures in hyperbolic space}

\author{Li Chen}
\address{Faculty of Mathematics and Statistics, Hubei Key Laboratory of Applied Mathematics, Hubei University,  Wuhan 430062, P.R. China}
\email{chenli@hubu.edu.cn}

\keywords{prescribed curvature problem; $h$-convex; Hessian quotient
equation.}

\subjclass[2010]{Primary 35J96, 52A39; Secondary 53A05.}


\begin{abstract}
For a smooth, closed and uniformly $h$-convex hypersurface $M$ in $\mathbb{H}^{n+1}$, the horospherical
Gauss map $G: M \rightarrow \mathbb{S}^n$ is a diffeomorphism.
We consider the problem of finding a smooth, closed and uniformly $h$-convex hypersurface $M\subset \mathbb{H}^{n+1}$
whose $k$-th shifted mean curvature $\widetilde{H}_{k}$ ($1\leq k\leq n$) is prescribed as a positive function $\tilde{f}(x)$ defined on $\mathbb{S}^n$, i.e.
\begin{eqnarray*}
\widetilde{H}_{k}(G^{-1}(x))=\tilde{f}(x).
\end{eqnarray*}
We can prove the existence of solution to this problem if the given function $\tilde{f}$ is even.
The similar problem has been considered by Guan-Guan
for convex hypersurfaces in Euclidean space two decades ago.
\end{abstract}

\maketitle

\baselineskip18pt

\parskip3pt

\section{Introduction}

For a smooth, closed and uniformly convex hypersurface $\Sigma$ in $\mathbb{R}^{n+1}$, the Gauss
map $n: \Sigma\rightarrow \mathbb{S}^n$ is a diffeomorphism. Then, the Weingarten matrix
\begin{eqnarray*}
\mathcal{W}=d n
\end{eqnarray*}
is positive definite. The $k$-th fundamental symmetric function
of the principal curvatures $\kappa_1, ..., \kappa_n$ (or the eigenvalues of $\mathcal{W}$)
\begin{eqnarray*}
H_k=\sum_{i_1<i_2< \cdot\cdot\cdot<
i_k}\kappa_{i_1}\cdot\cdot\cdot\kappa_{i_k}
\end{eqnarray*}
is called
the $k$-th mean curvature of $M$.
A fundamental question in classical
differential geometry concerns how much one can recover through the inverse
Gauss map when some information is prescribed on $\mathbb{S}^n$ \cite{Yau}. The most notable
example is probably the problem of finding
a closed and uniformly convex hypersurface in $\mathbb{R}^{n+1}$ whose
$k$-th mean curvature ($1\leq k\leq n$) is prescribed as a positive function $f$
defined on $\mathbb{S}^n$, i.e.
\begin{eqnarray}\label{GG}
H_{k}(n^{-1}(x))=f(x).
\end{eqnarray}
For $k=n$, this is the famous Minkowski problem (see \cite{Sch13} for a comprehensive introduction).
For $1\leq k<n$, this problem has been considered by Guan-Guan in \cite{Guan-g}.

In this paper, we want to consider the similar problem for uniformly $h$-convex hypersurfaces in hyperbolic space.
In order to formulate this problem, at first, we will briefly describe $h$-convex geometry in hyperbolic space
followed by Section 5 in \cite{And20} and Section 2 in \cite{Li-Xu}. Their works deeply declared
interesting formal similarities between the geometry of
$h$-convex domains in hyperbolic space and that of
convex Euclidean bodies.

We shall work
in the hyperboloid model of $\mathbb{H}^{n+1}$. For that, consider the
Minkowski space $\mathbb{R}^{n+1,1}$ with canonical coordinates
$X=(X^1, . . . , X^{n+1}, X^0)$ and the Lorentzian metric
\begin{eqnarray*}
\langle X, X\rangle=\sum_{i=1}^{n+1}(X^i)^2-(X^0)^2.
\end{eqnarray*}
$\mathbb{H}^{n+1}$ is the future time-like hyperboloid in Minkowski
space $\mathbb{R}^{n+1,1}$, i.e.
\begin{eqnarray*}
\mathbb{H}^{n+1}=\Big\{X=(X^1, \cdot\cdot\cdot, X^{n+1}, X^0) \in
\mathbb{R}^{n+1,1}: \langle X, X\rangle=-1, X^0>0\Big\}.
\end{eqnarray*}
The horospheres are hypersurfaces in $\mathbb{H}^{n+1}$ whose
principal curvatures equal to $1$ everywhere. In the hyperboloid model of $\mathbb{H}^{n+1}$,
they can be parameterized by $\mathbb{S}^n \times \mathbb{R}$
\begin{eqnarray*}
H_{x}(r)=\{X \in \mathbb{H}^{n+1}: \langle X, (x, 1)\rangle=-e^r\},
\end{eqnarray*}
where $x \in \mathbb{S}^n$ and $r \in \mathbb{R}$ represents the signed geodesic distance from the ``north pole" $N =(0, 1)\in
\mathbb{H}^{n+1}$.
The interior of the horosphere is called the horo-ball and we denote by
\begin{eqnarray*}
B_{x}(r)=\{X \in \mathbb{H}^{n+1}: 0>\langle X, (x,
1)\rangle>-e^r\}.
\end{eqnarray*}
If we use the Poincar\'e ball model $\mathbb{B}^{n+1}$ of $\mathbb{H}^{n+1}$, then $B_x(r)$ corresponds to
an $(n+1)$-dimensional ball which tangents to $\partial \mathbb{B}^{n+1}$ at $x$. Furthermore, $B_x(r)$ contains the origin for $r>0$.
\begin{definition}
A compact domain $\Omega\subset \mathbb{H}^{n+1}$ (or its boundary $\partial \Omega$) is horospherically convex
(or $h$-convex for short) if every boundary point $p$ of
$\partial \Omega$ has a supporting horo-ball, i.e. a horo-ball $B$
such that $\Omega\subset \overline{B}$ and $p \in \partial B$. When $\Omega$ is smooth, it is $h$-convex if
and only if the principal curvatures of $\partial \Omega$ are greater
than or equal to $1$.

For a smooth compact domain $\Omega$, we say
$\Omega$ (or $\partial \Omega$) is uniformly $h$-convex if its principal curvatures are greater than $1$.
\end{definition}

\begin{definition}\label{HG}
Let $\Omega\subset \mathbb{H}^{n+1}$ be a $h$-convex compact domain. For each $X \in \partial \Omega$,
$\partial \Omega$ has a supporting horo-ball $B_{x}(r)$
for some $r \in \mathbb{R}$ and $x \in \mathbb{S}^n$. Then the horospherical
Gauss map $G: \partial \Omega\rightarrow S_{\infty}=(\mathbb{S}^n, g_{\infty})$ of $\Omega$ (or $\partial \Omega$)
is defined by
\begin{eqnarray*}
G(X)=x, \quad g_{\infty}(X)=e^{2r}\sigma,
\end{eqnarray*}
where $\sigma$ is the canonical $\mathbb{S}^n$ metric.
\end{definition}

Note that the canonical $\mathbb{S}^n$ metric $\sigma$ is used in order to measure geometric quantities associated to the
Euclidean Gauss map of a hypersurface in $\mathbb{R}^{n+1}$. However, Espinar-G\'alvez-Mirain
explained in detail why we use the horospherical metric $g_{\infty}$ on $\mathbb{S}^n$
for measuring geometrical quantities with respect to the horospherical Gauss map (see section 2 in \cite{Esp09}).
Let $M$ be a $h$-convex hypersurface in $\mathbb{H}^{n+1}$.
For each $X \in M$, $M$  has a supporting horo-ball $B_{x}(r)$.
Then, we have (see (5.3) in \cite{And20} or (2.2) in \cite{Li-Xu})
\begin{eqnarray*}
X-\nu=e^{-r}\langle x, 1\rangle,
\end{eqnarray*}
where $\nu$ is the unit outward vector of $M$. Differentiating the above equation gives
\begin{eqnarray}\label{dG}
\langle d G, d G\rangle_{g_{\infty}}=(\mathcal{W}-I)^2,
\end{eqnarray}
where $\mathcal{W}$ is Weingarten matrix of $M$ and $I$ is the identity matrix.

The relation \eqref{dG} declares that the matrix $\mathcal{W}-I$ plays the role in $h$-convex hyperbolic geometry
of the matrix $\mathcal{W}$ in convex Euclidean geometry. This fact
motivates Andrews-Chen-Wei \cite{And20} to define
the shifted Weingarten matrix by
$\widetilde{\mathcal{W}}:=\mathcal{W}-I$.
Clearly, a smooth hypersurface $M\subset \mathbb{H}^{n+1}$ is uniformly $h$-convex
if and only if its shifted Weingarten matrix $\widetilde{\mathcal{W}}$ is positive definite. Thus, $G$ is a diffeomorphism from $M$ to $\mathbb{S}^n$ if $M$ is uniformly $h$-convex.

Let $\kappa_1, . . ., \kappa_n$ be principal curvatures of $M$, the shifted principal curvatures is defined by \cite{And20}
\begin{eqnarray*}
(\tilde{\kappa}_1, . . . , \tilde{\kappa}_n):=(\kappa_1 -1, . . ., \kappa_n -1),
\end{eqnarray*}
which are eigenvalues of the shifted Weingarten matrix
$\widetilde{\mathcal{W}}$.  Thus, the hyperbolic curvature radii take the form (see Definition 8 in \cite{Esp09})
\begin{eqnarray*}
\mathcal{R}_i:=\frac{1}{\kappa_i-1}.
\end{eqnarray*}
Espinar-G\'alvez-Mirain showed the hyperbolic curvature radii plays the role in hyperbolic space of the Euclidean
curvature radii from several different perspectives and used it to extend the
Christoffel problem \cite{Chr65, Fi} to hyperbolic space.

Using the shifted principal curvatures, the $k$-th shifted mean curvature for $1\leq k\leq n$ can be defined by
\begin{eqnarray*}
\widetilde{H}_k=\sum_{i_1<i_2< \cdot\cdot\cdot<
i_k}\tilde{\kappa}_{i_1}\cdot\cdot\cdot\tilde{\kappa}_{i_k},
\end{eqnarray*}
which is used by Andrews-Chen-Wei \cite{And20} to define
the modified quermassintegrals and the corresponding Alexandrov-Fenchel type inequalities
have proved in \cite{And20, Hu20} by using shifted curvature flows in hyperbolic space.
Later, Wang-Wei-Zhou \cite{Wang20} studied
inverse shifted curvature flow in hyperbolic space.

In this paper, we consider prescribed shifted Weingarten curvatures problem in hyperbolic space
which is motivated by the work of Guan-Guan \cite{Guan-g} on
the similar problem \eqref{GG} in Euclidean space .

\begin{problem}\label{LX}
Let $1\leq k\leq n$ be a fixed integer. For a given smooth positive function $\tilde{f}(x)$ defined on $\mathbb{S}^n$,
does there exist a smooth, closed and uniformly $h$-convex hypersurface $M\subset \mathbb{H}^{n+1}$ satisfying
\begin{eqnarray}\label{PC}
\widetilde{H}_{k}(G^{-1}(x))=\tilde{f}(x).
\end{eqnarray}
\end{problem}

This problem is also a special case of the generalized Christoffel problem (see (5.17) in \cite{Esp09}).
For $k=n$, this problem is prescribed shifted Gauss problem which has been studied in \cite{Li-Xu, Chen23}.
In this paper, we solve Problem \ref{LX} when $\tilde{f}$ is an even function, i.e.
$\tilde{f}(x)=\tilde{f}(-x)$ for all $x \in \mathbb{S}^n$.
\begin{theorem}\label{Main}
Assume $1\leq k<n$, there exists a smooth, closed, origin-symmetric and uniformly $h$-convex hypersurface in
$\mathbb{H}^{n+1}$
satisfying the equation \eqref{PC} for any smooth positive even function $\tilde{f}$ defined on $\mathbb{S}^n$.
\end{theorem}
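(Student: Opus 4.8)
\section*{Proof proposal}

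The plan is to convert \eqref{PC} into a Hessian quotient equation on the round sphere $(\mathbb S^n,\sigma)$ and to solve it by the method of continuity combined with a priori estimates, following the scheme of Guan--Guan \cite{Guan-g} but adapted to the shifted setting of \cite{And20,Li-Xu}. Since $M$ is uniformly $h$-convex, $G$ is a diffeomorphism, so $M$ is determined by its horospherical support function $u:\mathbb S^n\to\mathbb R$, where $u(x)=r$ is the parameter of the supporting horo-ball $B_x(r)$ at $G^{-1}(x)$. As recalled in \cite{Li-Xu,Chen23} for $k=n$, there is a symmetric matrix $W[u]$ of the form $\nabla^2 u+(\text{lower order terms in }u,\nabla u)$ with respect to $\sigma$ whose eigenvalues are exactly the hyperbolic curvature radii $\mathcal R_i=(\kappa_i-1)^{-1}$, and $M$ is uniformly $h$-convex if and only if $W[u]>0$. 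Since
\begin{eqnarray*}
\widetilde H_k=\sigma_k(\tilde\kappa_1,\dots,\tilde\kappa_n)=\sigma_k(\mathcal R_1^{-1},\dots,\mathcal R_n^{-1})=\frac{\sigma_{n-k}(\mathcal R)}{\sigma_n(\mathcal R)},
\end{eqnarray*}
equation \eqref{PC} becomes
\begin{eqnarray*}
\sigma_{n-k}(W[u])=\tilde f\,\sigma_n(W[u])\quad\text{on }\mathbb S^n ,
\end{eqnarray*}
to be solved in the admissible class $W[u]>0$. Because $\tilde f$ is even, it is natural to seek an even $u$, which corresponds precisely to an origin-symmetric $M$; I would work throughout in the subspace of even functions.

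For the continuity method, set $\tilde f_t=(1-t)c_0+t\,\tilde f$ for $t\in[0,1]$, where $c_0>0$ is a constant for which the geodesic sphere centered at the origin of the unique radius $R$ with $\binom nk(\coth R-1)^k=c_0$ is an explicit even, uniformly $h$-convex solution at $t=0$ (its principal curvatures are $\coth R>1$, so all $\tilde\kappa_i$ are equal). Let $\mathcal I\subset[0,1]$ be the set of $t$ for which the equation with $\tilde f_t$ has a smooth even uniformly $h$-convex solution. Openness of $\mathcal I$ follows from the implicit function theorem once the linearized operator — second order and elliptic because $W[u]>0$ — is seen to be invertible on even $C^{2,\alpha}$ functions; as in \cite{Guan-g} this can alternatively be handled by a degree-theoretic argument, computing a nonzero degree at the model $t=0$ and using the a priori estimates below to confine the solution set to a fixed ball in $C^{2,\alpha}_{\mathrm{even}}(\mathbb S^n)$ on which the degree is homotopy invariant. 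In either case the essential point is closedness of $\mathcal I$, i.e. estimates uniform in $t$.

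The a priori estimates are the main work. For the $C^0$ bound the evenness/origin-symmetry hypothesis is indispensable: from $0<\min\tilde f\le\tilde f_t\le\max\tilde f$ one bounds $\widetilde H_k$ above and below, and comparison with geodesic spheres (a very large sphere has $\widetilde H_k\to0$, a very small one has $\widetilde H_k\to\infty$) bounds the inradius and circumradius of the origin-symmetric domain, hence gives two-sided bounds $c\le u\le C$; without evenness such bounds fail. The $C^1$ bound then follows from $h$-convexity together with the $C^0$ bound. For the $C^2$ bound, one first observes via the Maclaurin inequality applied to $W[u]>0$ that $\tilde f\,\sigma_n=\sigma_{n-k}\ge\binom{n}{n-k}\sigma_n^{(n-k)/n}$ forces $\sigma_n(W[u])\ge c>0$; then a maximum-principle argument applied to the largest eigenvalue of $W[u]$ (differentiating the equation twice, using concavity of $\log\sigma_{n-k}-\log\sigma_n$ on the positive cone, the $C^0$--$C^1$ bounds, and the sign of the background curvature terms of $\mathbb S^n$) yields an upper bound on the eigenvalues of $W[u]$, which combined with $\sigma_n(W[u])\ge c$ also gives a uniform positive lower bound; thus $W[u]$ stays uniformly positive and the equation uniformly elliptic. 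Evans--Krylov then gives $C^{2,\alpha}$ and Schauder bootstrapping gives $C^\infty$, all uniform in $t$, so $\mathcal I$ is closed, uniform $h$-convexity is preserved, and $1\in\mathcal I$ yields the desired hypersurface. I expect the second-order estimate — in particular checking that the curvature terms of the background sphere enter with the favorable sign in the shifted quotient equation — to be the principal technical obstacle.
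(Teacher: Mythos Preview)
Your overall plan matches the paper's: reduce \eqref{PC} to a Hessian quotient equation on $\mathbb S^n$ via the horospherical support function, prove uniform $C^0$--$C^2$ estimates (evenness being essential at $C^0$), apply Evans--Krylov and Schauder, and conclude by a topological argument. Two points, however, deserve correction or comparison.

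\textbf{The existence mechanism.} The paper does \emph{not} use the method of continuity; it uses Li's degree theory for nonlinear elliptic operators. Your primary proposal --- openness via the implicit function theorem --- has a real gap: you would need the linearized operator to be invertible on even functions at \emph{every} solution along the homotopy $\tilde f_t$, not just at the geodesic-sphere model, and you offer no argument for this (nor is one available in general). The paper avoids this entirely: it computes the linearization only at the constant solution $\varphi\equiv c$, where it is $a(n,k,c)(\Delta_{\mathbb S^n}+n)$; since the kernel of $\Delta_{\mathbb S^n}+n$ consists of first spherical harmonics, which are odd, this operator is invertible on even functions, the degree at $t=0$ is $\pm1$, and homotopy invariance (using the a priori estimates to keep solutions inside a fixed open set) gives a solution at $t=1$. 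You do mention degree theory as an alternative, and that is the route that actually works.

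\textbf{The $C^0$ estimate.} Your sketch (bound $\widetilde H_k$ above and below, then compare with geodesic spheres to bound in/circumradius) is plausible but vague. The paper's argument is sharper and purely analytic: apply the maximum principle to the equation at the extrema of $\varphi=e^u$ to get $\max_{\mathbb S^n}\varphi\ge C>1$ and $\min_{\mathbb S^n}\varphi\le C$, and then close the loop with the inequality
\[
\tfrac12\Big(\max_{\mathbb S^n}\varphi+\frac{1}{\max_{\mathbb S^n}\varphi}\Big)\le\min_{\mathbb S^n}\varphi,
\]
proved from the support-function representation \eqref{X} together with evenness (which lets one assume the maximum and minimum lie in the same closed hemisphere). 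This immediately gives two-sided bounds $1<C\le\varphi\le C'$. The $C^1$ bound then follows as in \cite{Li-Xu}, and for $C^2$ the paper uses $\Delta\varphi$ rather than the largest eigenvalue as the test function --- a minor technical difference from what you propose.
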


In Sect. 2, we will show that Problem \ref{LX} is reduced to solve a Hessian quotient
equation on $\mathbb{S}^n$.  After establishing the a priori estimates for solutions to the Hessian quotient
equation in Sect. 3, we will use the degree theory to prove Theorem \ref{Main} in Sect. 4.

\section{The Hessian quotient
equation associated to Problem \ref{LX}}

In this section, using the the horospherical support function of a
$h$-convex hypersurface in $\mathbb{H}^{n+1}$, we can reduce Problem \ref{LX} to solve a Hessian quotient
equation on $\mathbb{S}^n$.

We will continue to review $h$-convex geometry in hyperbolic space in Section 1
followed by Section 5 in \cite{And20} and Section 2 in \cite{Li-Xu}.
We also work
in the hyperboloid model of $\mathbb{H}^{n+1}$ as Section 1.
Let $\Omega$ be a $h$-convex compact domain in $\mathbb{H}^{n+1}$. Then
for each $x\in \mathbb{S}^n$ we define the horospherical support function
of $\Omega$ (or $\partial \Omega$) in direction $x$ by
\begin{eqnarray*}
u(x):=\inf\{s \in \mathbb{R}: \Omega\subset \overline{B}_{x}(s)\}.
\end{eqnarray*}
We also have the alternative characterisation
\begin{eqnarray}\label{SD}
u(x)=\sup\{\log(-\langle X, (x, 1)\rangle): X \in \Omega\}.
\end{eqnarray}
The support function completely determines a
$h$-convex compact domain $\Omega$, as an intersection of horo-balls:
\begin{eqnarray*}
\Omega=\bigcap_{x \in \mathbb{S}^n}\overline{B}_{x}(u(x)).
\end{eqnarray*}

Since $G$ is a diffeomorphism from $\partial \Omega$ to $\mathbb{S}^n$ for a compact uniformly $h$-convex domain $\Omega$. Then,
$\overline{X}=G^{-1}$ is a smooth embedding from $\mathbb{S}^n$ to $\partial \Omega$ and
$\overline{X}$ can be written in terms of the support function $u$, as follows:
\begin{eqnarray}\label{X}
\overline{X}(x)=\frac{1}{2}\varphi(-x, 1)+\frac{1}{2}
\Big(\frac{|D \varphi|^2}{\varphi}+\frac{1}{\varphi}\Big)(x, 1)-(D\varphi, 0),
\end{eqnarray}
where $\varphi=e^u$ and
$D$ is the Levi-Civita connection of the standard metric $\sigma$ of $\mathbb{S}^{n}$. Then,
after choosing normal coordinates around $x$ on $\mathbb{S}^{n+1}$, we
express the shifted Weingarten matrix in the horospherical support function
(see (1.16) in \cite{And20}, Lemma 2.2 in \cite{Li-Xu})
\begin{eqnarray*}
\widetilde{\mathcal{W}}=\Big(\varphi A[\varphi]\Big)^{-1},
\end{eqnarray*}
where
\begin{eqnarray*}
A[\varphi]=D^2\varphi-\frac{1}{2}\frac{|D\varphi|^2}{\varphi}I+\frac{1}{2}\Big(\varphi-\frac{1}{\varphi}\Big)I
\end{eqnarray*}
Thus, $\Omega \subset \mathbb{H}^{n+1}$ is  uniformly $h$-convex if and only if
the matrix $A[\varphi]$ is positive definite.

So, Problem \ref{LX} is equivalent to find a smooth positive solution $\varphi(x)$ with $A[\varphi(x)]>0$ for all $x \in \mathbb{S}^n$
to the equation
\begin{eqnarray}\label{MA}
\frac{\sigma_n(A[\varphi])}{\sigma_{n-k}(A[\varphi])}=\varphi^{-k}f(x),
\end{eqnarray}
where $f=\tilde{f}^{-1}$ and $\sigma_k(A)$ is the $k$-th elementary symmetric function of a symmetric matrix $A$. Next, we will give the definition of the elementary symmetric functions and review their basic properties which could be found in
\cite{L96}.

\begin{definition}
For any $k=1,2,\cdots,n$, we set
$$\sigma_{k}(\lambda)=\sum\limits_{1\le i_1<i_2<\cdots < i_k\le n}
\lambda_{{i}_{1}}\lambda_{{i}_{2}}\cdots\lambda_{{i}_{k}},\leqno(1.1.1)$$
for any $\lambda=(\lambda_{1},\cdots,\lambda_{n})\in\mathbb{R}^{n}$ and set $\sigma_0(\lambda)=1$.
Let $\lambda_1(A)$, ..., $\lambda_n(A)$ be the eigenvalues of $A$ and
denote by $\lambda(A)=(\lambda_1(A), ..., \lambda_n(A)$. We define by $\sigma_{k}(A)=\sigma_{k}(\lambda(A))$.
\end{definition}

We recall that the Garding's cone is defined as
$$\Gamma_{k}=\{\lambda\in \mathbb{R}^{n}:\sigma_{i}(\lambda)>0,\forall 1\le i\le k\}.$$

The following two propositions will be used later.
\begin{proposition}(Generalized Newtown-MacLaurin
inequality) For $\lambda\in\Gamma_{k}$ and $k>l\geq0$, $r>s\geq0$,
$k\geq{r}$, $l\geq{s}$, we have
\begin{eqnarray}\label{GNM}
\bigg[\frac{\sigma_{k}(\lambda)/C_{n}^{k}}{\sigma_{l}(\lambda)/C_{n}^{l}}\bigg]^{\frac{1}{k-l}}
\leq\bigg[\frac{\sigma_{r}(\lambda)/C_{n}^{r}}{\sigma_{s}(\lambda)/
C_{n}^{s}}\bigg]^{\frac{1}{r-s}}
\end{eqnarray}
and the equality holds if $\lambda_{1}=...=\lambda_{n}>0.$
\end{proposition}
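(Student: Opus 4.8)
The plan is to deduce \eqref{GNM} from Newton's inequality together with an elementary fact about averages of monotone sequences. Normalize by setting $p_j:=\sigma_j(\lambda)/C_n^j$ for $0\le j\le n$, so that $p_0=1$; since $\lambda\in\Gamma_k$ we have $p_0,p_1,\dots,p_k>0$, and the two sides of \eqref{GNM} are exactly $(p_k/p_l)^{1/(k-l)}$ and $(p_r/p_s)^{1/(r-s)}$. The one classical input I would use is Newton's inequality
\[
p_j^2\ \ge\ p_{j-1}\,p_{j+1},\qquad 1\le j\le k-1 ,
\]
which in fact holds for every $\lambda\in\mathbb{R}^n$: it is recorded among the basic properties of $\sigma_j$ in \cite{L96}, and can also be obtained by applying Rolle's theorem to successive derivatives of the real-rooted polynomial $\prod_{i=1}^n(t+\lambda_i)=\sum_j\sigma_j(\lambda)t^{n-j}$ (after homogenizing in two variables and differentiating, one is left with the discriminant inequality for a real-rooted binary quadratic). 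The hypothesis $\lambda\in\Gamma_k$ is then used only to guarantee the positivity of $p_0,\dots,p_k$, so that logarithms are legitimate.

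Granting Newton's inequality, put $a_j:=\log p_j$ for $0\le j\le k$ and $d_j:=a_j-a_{j-1}$ for $1\le j\le k$; Newton's inequality says precisely that $d_1\ge d_2\ge\cdots\ge d_k$, i.e.\ the finite sequence $j\mapsto a_j$ is concave. Since $\sum_{j=l+1}^{k}d_j=a_k-a_l=\log(p_k/p_l)$ and similarly for the pair $(r,s)$, taking logarithms shows that \eqref{GNM} is equivalent to
\[
\frac{1}{k-l}\sum_{j=l+1}^{k}d_j\ \le\ \frac{1}{r-s}\sum_{j=s+1}^{r}d_j .
\]
The conditions $l\ge s$ and $k\ge r$ mean that the index block $A:=\{l+1,\dots,k\}$ lies weakly to the right of $B:=\{s+1,\dots,r\}$, and for the non-increasing sequence $(d_j)$ such a block has the smaller (or equal) average. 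I would verify this in two cases. If $A$ and $B$ are disjoint (that is, $l\ge r$), every index of $A$ exceeds every index of $B$, hence every $d_j$ with $j\in A$ is $\le$ every $d_j$ with $j\in B$, and the average over $A$ is $\le\min_B d_j\le$ the average over $B$. If they overlap ($l<r$), let $\bar d$ be the average of $(d_j)$ over the common block $A\cap B=\{l+1,\dots,r\}$; the indices of $A\setminus B$ all exceed $r$, so the corresponding $d_j$ are $\le d_r\le\bar d$, giving $\mathrm{avg}_A\le\bar d$, while the indices of $B\setminus A$ are all $\le l$, so the corresponding $d_j$ are $\ge d_{l+1}\ge\bar d$, giving $\mathrm{avg}_B\ge\bar d$; hence $\mathrm{avg}_A\le\bar d\le\mathrm{avg}_B$. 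Exponentiating yields \eqref{GNM}. Finally, if $\lambda_1=\cdots=\lambda_n=t_0>0$ then $p_j=t_0^{\,j}$, so $d_j\equiv\log t_0$ and both sides of \eqref{GNM} equal $t_0$, which gives the equality assertion.

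The substantive content is concentrated entirely in the single appeal to Newton's inequality; everything downstream of it is soft. Thus the only point requiring any care is the combinatorial reduction just described, and there the one thing not to overlook is that the hypotheses permit $l>r$, so the blocks $\{l+1,\dots,k\}$ and $\{s+1,\dots,r\}$ need not overlap and the disjoint case must be treated separately; beyond that no real difficulty arises.
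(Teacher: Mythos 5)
The paper does not prove this proposition; it simply cites the standard reference \cite{L96} for the basic properties of elementary symmetric functions. Your argument is correct and is essentially the textbook derivation: Newton's inequality $p_j^2\ge p_{j-1}p_{j+1}$ (valid for all real $\lambda$) makes $j\mapsto\log p_j$ concave on $\{0,\dots,k\}$ once $\lambda\in\Gamma_k$ ensures positivity of $p_0,\dots,p_k$, and the claimed inequality is exactly the monotonicity of difference quotients of a concave sequence, which your two-case block-averaging argument establishes cleanly (including the degenerate subcases $A\subset B$ and $A=B$). The equality check is also fine. This matches the standard proof that would be found in \cite{L96}, so it is the same approach the paper implicitly relies on.
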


\begin{proposition}\label{con}
For $\lambda\in\Gamma_{k}$, we have for $n\geq k>l\geq0$
\begin{eqnarray}\label{el}
\frac{\partial}{\partial \lambda_i}\bigg[\frac{\sigma_{k}(\lambda)}{\sigma_{l}(\lambda)}\bigg]>0
\end{eqnarray}
for $1\leq i\leq n$
and
\begin{eqnarray*}
\bigg[\frac{\sigma_{k}(\lambda)}{\sigma_{l}(\lambda)}\bigg]^{\frac{1}{k-l}}
\end{eqnarray*}
is a concave function.
Moreover, we have
\begin{eqnarray}\label{sum}
\sum_{i=1}^{n}\frac{\partial[\frac{\sigma_{k}}{\sigma_{l}}]
^{\frac{1}{k-l}}(\lambda)}{\partial\lambda_{i}}
\geq[\frac{C_{n}^{k}}{C_{n}^{l}}]^{\frac{1}{k-l}}.
\end{eqnarray}
\end{proposition}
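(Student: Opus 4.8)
The plan is to establish the three assertions of Proposition \ref{con} in the order stated, using only the generalized Newton--MacLaurin inequality \eqref{GNM} together with the classical concavity of $\sigma_m^{1/m}$ on $\Gamma_m$ (G{\aa}rding), as reviewed in \cite{L96}. The gradient estimate \eqref{sum} will turn out to be a formal consequence of the concavity assertion, so the genuine work lies in \eqref{el} and in the concavity.

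\emph{Monotonicity \eqref{el}.} Write $\lambda|i\in\mathbb{R}^{n-1}$ for $\lambda$ with its $i$-th entry deleted, and recall the elementary relations $\partial\sigma_m(\lambda)/\partial\lambda_i=\sigma_{m-1}(\lambda|i)$ and $\sigma_m(\lambda)=\sigma_m(\lambda|i)+\lambda_i\sigma_{m-1}(\lambda|i)$ (with $\sigma_{-1}\equiv0$). Applying the quotient rule and cancelling the two terms that carry a factor $\lambda_i$ gives the identity
\begin{equation*}
\sigma_l(\lambda)^2\,\frac{\partial}{\partial\lambda_i}\Big[\frac{\sigma_k(\lambda)}{\sigma_l(\lambda)}\Big]=\sigma_{k-1}(\lambda|i)\,\sigma_l(\lambda|i)-\sigma_k(\lambda|i)\,\sigma_{l-1}(\lambda|i).
\end{equation*}
It then suffices to show the right-hand side is positive, using that $\lambda\in\Gamma_k$ forces $\lambda|i\in\Gamma_{k-1}$. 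If $l=0$ it equals $\sigma_{k-1}(\lambda|i)>0$. If $l\ge1$ and $\sigma_k(\lambda|i)\le0$ it is positive because $\sigma_{k-1}(\lambda|i),\sigma_l(\lambda|i)>0$; and if $\sigma_k(\lambda|i)>0$, then $\lambda|i\in\Gamma_k$, and \eqref{GNM} applied to $\lambda|i$ with the index pairs $(k,k-1)$ and $(l,l-1)$ yields $\sigma_k(\lambda|i)/\sigma_{k-1}(\lambda|i)\le c\,\sigma_l(\lambda|i)/\sigma_{l-1}(\lambda|i)$ with $c=\frac{(n-k)/k}{(n-l)/l}<1$ (since $x\mapsto(n-x)/x$ is decreasing and $k>l$); multiplying by $\sigma_{k-1}(\lambda|i)\sigma_{l-1}(\lambda|i)>0$ finishes this case. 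Note that no use is made of the equality case of \eqref{GNM}.

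\emph{Concavity.} For $F:=[\sigma_k/\sigma_l]^{1/(k-l)}$ I would use the telescoping factorization $F=\big(\prod_{j=l+1}^{k}\sigma_j/\sigma_{j-1}\big)^{1/(k-l)}$ together with the elementary fact that the geometric mean $H(y)=(y_1\cdots y_{k-l})^{1/(k-l)}$ is concave and coordinatewise non-decreasing on $\mathbb{R}_{>0}^{\,k-l}$: composing such an $H$ with the concave positive functions $\sigma_j/\sigma_{j-1}$ ($l+1\le j\le k$) produces a concave function. Each factor is positive on $\Gamma_k$ since $\Gamma_k\subset\Gamma_j$. The single remaining ingredient, concavity of the one-step quotient $\sigma_j/\sigma_{j-1}$ on $\Gamma_j$, is the classical fact I would quote from \cite{L96}, or verify by the standard second-derivative computation (which is manageable for consecutive indices); \emph{this is the step I expect to be the main obstacle}, being the only place where a genuine inequality for the Hessian of a $\sigma$-quotient enters. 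Equivalently, one may simply invoke the concavity of $[\sigma_k/\sigma_l]^{1/(k-l)}$ directly from \cite{L96}.

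\emph{Gradient estimate \eqref{sum}.} This follows from the concavity. Since $\Gamma_k$ is an open convex cone containing $\mathbf{1}=(1,\dots,1)$, we have $\lambda+t\mathbf{1}=(1+t)\big(\tfrac{1}{1+t}\lambda+\tfrac{t}{1+t}\mathbf{1}\big)\in\Gamma_k$ for every $t\ge0$, so $g(t):=F(\lambda+t\mathbf{1})$ is smooth and concave on $[0,\infty)$ and hence $g'$ is non-increasing. Because $F$ is homogeneous of degree one, each $\partial_iF$ is homogeneous of degree zero, whence $g'(t)=\sum_i\partial_iF(\lambda+t\mathbf{1})=\sum_i\partial_iF(t^{-1}\lambda+\mathbf{1})\to\sum_i\partial_iF(\mathbf{1})$ as $t\to\infty$. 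Therefore $\sum_i\partial_iF(\lambda)=g'(0)\ge\sum_i\partial_iF(\mathbf{1})$, and by Euler's identity $\sum_i\partial_iF(\mathbf{1})=F(\mathbf{1})=[\sigma_k(\mathbf{1})/\sigma_l(\mathbf{1})]^{1/(k-l)}=[C_n^k/C_n^l]^{1/(k-l)}$, which is exactly \eqref{sum}.
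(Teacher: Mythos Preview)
Your argument is correct. The paper itself does not prove Proposition~\ref{con}: it states the three assertions and refers the reader to \cite{L96} for the basic properties of elementary symmetric functions, so there is no ``paper's own proof'' to compare against. What you have supplied is a clean, self-contained justification of these standard facts.

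A couple of minor remarks. In the monotonicity step, the Newton--MacLaurin inequality you apply is for vectors in $\mathbb{R}^{n-1}$; your constant $c=\frac{(n-k)/k}{(n-l)/l}$ is indeed the one arising from $C_{n-1}^{k-1}/C_{n-1}^{k}=k/(n-k)$, so the computation is consistent, and the edge case $k=n$ is automatically absorbed into the branch $\sigma_k(\lambda|i)\le 0$ since $\sigma_n$ of an $(n-1)$-tuple vanishes. For the concavity, your telescoping/geometric-mean reduction to the one-step quotients $\sigma_j/\sigma_{j-1}$ is valid, and the concavity of those quotients on $\Gamma_j$ is exactly the standard result recorded in \cite{L96}; alternatively, as you note, one may simply cite the concavity of $[\sigma_k/\sigma_l]^{1/(k-l)}$ directly. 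Your derivation of \eqref{sum} from concavity, degree-one homogeneity, and the limit along the ray $\lambda+t\mathbf{1}$ is the usual argument and is carried out correctly.
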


\section{The a priori estimates}

For convenience, in the following of this paper, we always assume that $f$ is
a smooth positive, even function on $\mathbb{S}^n$ and $\varphi$ is a smooth even solution
to the equation \eqref{MA} with $A[\varphi]>0$.
Moreover, let $M$ be the smooth, closed and uniformly $h$-convex hypersurface in $\mathbb{H}^{n+1}$ with the horospherical support function $u=\log \varphi$.
Clearly, $M$ is symmetry with the origin and $\varphi(x)> 1$ for $x \in \mathbb{S}^n$.

The following easy and important equality is key for
the $C^0$ estimate.
\begin{lemma}
We have
\begin{eqnarray}\label{c0-12}
\frac{1}{2}\Big(\max_{\mathbb{S}^n}\varphi+\frac{1}{\max_{\mathbb{S}^n}\varphi}\Big)\leq \min_{\mathbb{S}^n}\varphi.
\end{eqnarray}
\end{lemma}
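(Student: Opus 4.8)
The plan is to combine the explicit formula \eqref{X} for the inverse Gauss map with the origin-symmetry of $\Omega$. Set $R:=\max_{\mathbb{S}^n}\varphi$ and choose $x_0\in\mathbb{S}^n$ with $\varphi(x_0)=R$. Since $\mathbb{S}^n$ is closed, $x_0$ is a critical point of $\varphi$, so $D\varphi(x_0)=0$, and \eqref{X} collapses to
\[
P:=\overline{X}(x_0)=\tfrac12 R\,(-x_0,1)+\tfrac1{2R}\,(x_0,1)
=\Bigl(\bigl(\tfrac1{2R}-\tfrac R2\bigr)x_0,\ \tfrac R2+\tfrac1{2R}\Bigr)\in\partial\Omega .
\]
(One checks directly that $\langle P,P\rangle=-1$ and $\langle P,(x_0,1)\rangle=-R$, so $P$ is indeed the point of $M$ lying over $x_0$.)

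Next I would invoke that $M$ is symmetric about the origin $N=(0,1)$. In the hyperboloid model the isometry realizing this symmetry is the spatial reflection $(Y^1,\dots,Y^{n+1},Y^0)\mapsto(-Y^1,\dots,-Y^{n+1},Y^0)$: it fixes $N$, acts as $-\mathrm{Id}$ on $T_N\mathbb{H}^{n+1}$, and carries $B_x(r)$ to $B_{-x}(r)$, which is exactly consistent with $\varphi$ being even. Hence, together with $P$, the point $P^{*}:=\bigl((\tfrac R2-\tfrac1{2R})x_0,\ \tfrac R2+\tfrac1{2R}\bigr)$ also belongs to $\Omega$.

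Now fix an arbitrary $x\in\mathbb{S}^n$ and let $t:=x_0\cdot x\in[-1,1]$ be the Euclidean inner product in $\mathbb{R}^{n+1}$. By the characterization \eqref{SD}, $u(x)\ge\log\bigl(-\langle Y,(x,1)\rangle\bigr)$ for every $Y\in\Omega$; applying this with $Y=P$ and with $Y=P^{*}$ and evaluating the two Lorentzian products gives
\[
\varphi(x)=e^{u(x)}\ \ge\ \max\Bigl\{\tfrac R2+\tfrac1{2R}+\bigl(\tfrac R2-\tfrac1{2R}\bigr)t,\ \ \tfrac R2+\tfrac1{2R}-\bigl(\tfrac R2-\tfrac1{2R}\bigr)t\Bigr\}
=\tfrac12\Bigl(R+\tfrac1R\Bigr)+\tfrac12\Bigl(R-\tfrac1R\Bigr)|t| .
\]
Since $\varphi>1$ forces $R>1$, the last term is nonnegative, so $\varphi(x)\ge\tfrac12\bigl(R+\tfrac1R\bigr)$ for every $x\in\mathbb{S}^n$; taking the minimum over $\mathbb{S}^n$ yields \eqref{c0-12}.

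I do not expect a genuine obstacle here: the content is the geometric observation that the two antipodal boundary points $P,P^{*}$ cut out by the ``highest'' supporting horosphere of an origin-symmetric body already force the support function to be at least $\tfrac12(R+1/R)=\cosh(\log R)$ in every direction. The only points that need a little care are verifying that $D\varphi(x_0)=0$ legitimately simplifies \eqref{X} to the stated $P$, and identifying the origin-symmetry isometry in the hyperboloid model so that $P^{*}\in\Omega$; both are routine.
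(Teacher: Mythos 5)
Your argument is correct and is essentially the paper's proof: both evaluate $\overline{X}$ at the maximum point via \eqref{X} (using $D\varphi=0$ there), plug into the support-function characterization \eqref{SD}, and invoke the evenness/origin symmetry to handle the sign of $\langle x_0,x\rangle$. The only cosmetic difference is that you use the reflected boundary point $P^{*}$ to get the bound in every direction at once, whereas the paper first proves the bound for directions with $\langle x,x_1\rangle\ge 0$ and then uses evenness of $\varphi$ to place the minimum point in that hemisphere.
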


\begin{proof}
The inequality can be found in the proof of Lemma 7.2 in \cite{Li-Xu}. For completeness, we give a proof here.
Assume that $\varphi(x_1)=\max_{\mathbb{S}^n}\varphi$ and denote $\overline{X}(x_1)=G^{-1}(x_1)$ as before. Then, we have for any $x \in \mathbb{S}^n$ by
the definition of the horospherical support function \eqref{SD}
\begin{eqnarray*}
-\langle \overline{X}(x_1), (x, 1)\rangle\leq \varphi(x), \quad \forall x \in \mathbb{S}^n.
\end{eqnarray*}
Substituting the expression \eqref{X} for $\overline{X}$ into the above equality yields
\begin{eqnarray}\label{D0-3}
\frac{1}{2}\varphi(x_1)(1+\langle x_1, x\rangle)+\frac{1}{2}\frac{1}{\varphi(x_1)}(1-\langle x_1, x\rangle)\leq \varphi(x),
\end{eqnarray}
where we used the fact $D\varphi(x_1)=0$. Note that $\varphi(x_1)\geq 1$, we find from \eqref{D0-3}
\begin{eqnarray}\label{D0-4}
\frac{1}{2}\Big(\varphi(x_1)+\frac{1}{\varphi(x_1)}\Big)\leq \varphi(x) \quad \mbox{for} \quad \langle x, x_1\rangle\geq 0.
\end{eqnarray}
Since $\varphi$ is even, we can assume
that the minimum point $x_0$ of $\varphi(x)$ satisfies $\langle x_0, x_1\rangle\geq 0$. Thus,
the equality \eqref{c0-12} follows that from \eqref{D0-4}.
\end{proof}

Now, we use the maximum principle to get the $C^0$-estimate.
\begin{lemma}\label{C-C0}
We have
\begin{eqnarray}\label{C0}
0<\frac{1}{C}\leq u(x)\leq C, \quad \forall \ x \in \mathbb{S}^n,
\end{eqnarray}
where $C$ is a positive constant depending on $k$, $n$ and $f$.
\end{lemma}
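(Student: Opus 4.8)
The plan is to use the equation \eqref{MA} together with the maximum principle at the extremal points of $\varphi$, and then feed the resulting bound on $\max_{\mathbb{S}^n}\varphi$ into the elementary inequality \eqref{c0-12} to recover a lower bound on $\min_{\mathbb{S}^n}\varphi$. First I would record that since $u=\log\varphi$ and $\varphi>1$ everywhere, it suffices to prove an upper bound $\varphi\le C$; the lower bound $\varphi\ge 1/C$ (in fact $\varphi>1$) then follows from \eqref{c0-12}, which gives $\min\varphi \ge \tfrac12(\max\varphi + 1/\max\varphi)$, so any two-sided control of $\max\varphi$ controls $\min\varphi$ as well. Thus the whole lemma reduces to: $\max_{\mathbb{S}^n}\varphi \le C(k,n,f)$.

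To bound $\max\varphi$, let $x_1$ be a maximum point of $\varphi$. There $D\varphi(x_1)=0$ and $D^2\varphi(x_1)\le 0$, so
\begin{eqnarray*}
A[\varphi](x_1)=D^2\varphi(x_1)+\frac{1}{2}\Big(\varphi(x_1)-\frac{1}{\varphi(x_1)}\Big)I \le \frac{1}{2}\Big(\varphi(x_1)-\frac{1}{\varphi(x_1)}\Big)I.
\end{eqnarray*}
Since $A[\varphi]>0$, the eigenvalues $\lambda_i$ of $A[\varphi](x_1)$ satisfy $0<\lambda_i\le \tfrac12(\varphi(x_1)-1/\varphi(x_1))=:\Lambda$. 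Monotonicity of $\sigma_n/\sigma_{n-k}$ on the positive cone (Proposition \ref{con}, inequality \eqref{el}) then gives
\begin{eqnarray*}
\frac{\sigma_n(A[\varphi])}{\sigma_{n-k}(A[\varphi])}(x_1)\le \frac{\sigma_n(\Lambda,\dots,\Lambda)}{\sigma_{n-k}(\Lambda,\dots,\Lambda)}=\frac{C_n^n}{C_n^{n-k}}\Lambda^k=\Lambda^k,
\end{eqnarray*}
while the left-hand side of \eqref{MA} equals $\varphi(x_1)^{-k}f(x_1)$. Hence
\begin{eqnarray*}
\varphi(x_1)^{-k}f(x_1)\le \Big(\tfrac12\big(\varphi(x_1)-\tfrac{1}{\varphi(x_1)}\big)\Big)^k < \Big(\tfrac12\varphi(x_1)\Big)^k,
\end{eqnarray*}
which after rearranging yields $\min_{\mathbb{S}^n} f \le f(x_1) < (\varphi(x_1)/2)^k \cdot \varphi(x_1)^{k} \cdot \varphi(x_1)^{-k}$; more carefully, $f(x_1)<2^{-k}\varphi(x_1)^{2k}$ is the wrong direction, so instead I would extract the bound from the comparison at a \emph{minimum} point of $\varphi$ as well.

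The cleaner route, which I expect to be the one to carry out in detail, is to also evaluate \eqref{MA} at a minimum point $x_0$ of $\varphi$, where $D^2\varphi(x_0)\ge 0$ and therefore $A[\varphi](x_0)\ge \tfrac12(\varphi(x_0)-1/\varphi(x_0))I$; monotonicity then gives $\varphi(x_0)^{-k}f(x_0)\ge \big(\tfrac12(\varphi(x_0)-1/\varphi(x_0))\big)^k$, i.e. $\tfrac12(\varphi(x_0)-1/\varphi(x_0))\le (\varphi(x_0)^{-k}f(x_0))^{1/k}\le \varphi(x_0)^{-1}(\max f)^{1/k}$, which bounds $\varphi(x_0)^2-1\le 2(\max f)^{1/k}$ and hence $\min_{\mathbb{S}^n}\varphi=\varphi(x_0)\le C_1(n,k,f)$. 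Combining this with \eqref{c0-12} in the form $\tfrac12\max\varphi\le \tfrac12(\max\varphi+1/\max\varphi)\le \min\varphi\le C_1$ gives $\max_{\mathbb{S}^n}\varphi\le 2C_1$, and then $1<\varphi\le 2C_1$ everywhere, so $0<c\le u=\log\varphi\le C$. The only subtlety — the main (minor) obstacle — is checking the direction of the inequalities at the two extremal points and making sure the bound obtained at the minimum point is genuinely an upper bound that does not degenerate; once the sign of $D^2\varphi$ at a min/max is handled correctly together with $\varphi>1$, everything is elementary, using only \eqref{el}, the normalization $C_n^n/C_n^{n-k}=1$, and \eqref{c0-12}.
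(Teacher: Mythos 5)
Your overall strategy (maximum principle at the extremal points of $\varphi$, combined with the elementary inequality \eqref{c0-12}) is the same as the paper's, and your upper bound is correct: at a minimum point $x_0$ you get $A[\varphi](x_0)\ge \tfrac12(\varphi-1/\varphi)I$, hence $\min\varphi\le C_1(f)$, and \eqref{c0-12} then gives $\max\varphi\le 2C_1$. But there is a genuine gap in your lower bound. You conclude only that ``$1<\varphi\le 2C_1$ everywhere, so $0<c\le u$''; the pointwise inequality $\varphi>1$ does \emph{not} produce a constant $c>0$ depending only on $k$, $n$ and $f$, which is what the lemma asserts and what the later degree-theory argument requires (the open set $\mathcal{O}_R$ is defined by $\varphi>1+\tfrac1R$, so solutions must stay uniformly away from $\varphi=1$). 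Without a quantitative lower bound, $u=\log\varphi$ could a priori degenerate to $0$ along a family of solutions.

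The ingredient you are missing is precisely the maximum-point computation that you discarded as going ``the wrong direction.'' It was never meant to give an upper bound on $\max\varphi$; it gives the needed \emph{lower} bound away from $1$. Keeping the $-1/\varphi$ term rather than dropping it, at a maximum point $x_1$ you have $A[\varphi](x_1)\le \tfrac12(\varphi-1/\varphi)I$ and hence
\begin{equation*}
f(x_1)\le \varphi(x_1)^k\Bigl(\tfrac12\bigl(\varphi(x_1)-\tfrac{1}{\varphi(x_1)}\bigr)\Bigr)^k
=\frac{1}{2^k}\bigl(\varphi(x_1)^2-1\bigr)^k,
\end{equation*}
so $(\max\varphi)^2-1\ge 2(\min_{\mathbb{S}^n} f)^{1/k}$ and $\max\varphi\ge\bigl(1+2(\min f)^{1/k}\bigr)^{1/2}>1$. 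Feeding this into \eqref{c0-12} gives $\min\varphi\ge\tfrac12(\max\varphi+1/\max\varphi)\ge 1+\delta(k,n,f)$ with $\delta>0$, whence $u\ge \log(1+\delta)\ge 1/C$. This is exactly the pair of inequalities the paper records (its two displayed bounds on $g(\max\varphi)$ and $g(\min\varphi)$ with $g(x)=x^{2k}2^{-k}(1-x^{-2})^k$); with this repair your argument coincides with the paper's proof.
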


\begin{proof}
Applying the maximum principle, we have from the equation \eqref{MA}
\begin{eqnarray*}
(\max_{\mathbb{S}^n}\varphi)^{2k}\frac{1}{2^k}\bigg[1-\frac{1}{(\max_{\mathbb{S}^n}\varphi)^2}\bigg]^k\geq C>0
\end{eqnarray*}
and
\begin{eqnarray*}
(\min_{\mathbb{S}^n}\varphi)^{2k}\frac{1}{2^k}\bigg[1-\frac{1}{(\min_{\mathbb{S}^n}\varphi)^2}\bigg]^k\leq C.
\end{eqnarray*}
Since the function
\begin{eqnarray*}
g(x)=x^{2k}\frac{1}{2^k}\bigg(1-\frac{1}{x^2}\bigg)^k
\end{eqnarray*}
is increasing in $[1, +\infty)$, $g(1)=0$ and $g(+\infty)=+\infty$, we obtain
\begin{eqnarray}\label{c0-11}
\min_{\mathbb{S}^n}\varphi\leq C, \quad \mbox{and} \quad \max_{\mathbb{S}^n}\varphi\geq C>1.
\end{eqnarray}
Combining \eqref{c0-11} and \eqref{c0-12}, we find
\begin{eqnarray*}
1<C\leq\min_{\mathbb{S}^n}\varphi\leq \max_{\mathbb{S}^n}\varphi\leq C^{\prime},
\end{eqnarray*}
which implies that
\begin{eqnarray*}
0<\frac{1}{C}\leq\min_{\mathbb{S}^n}u\leq \max_{\mathbb{S}^n}u\leq C.
\end{eqnarray*}
So, we complete the proof.
\end{proof}

As a corollary, we have the gradient estimate from Lemma 7.3 in \cite{Li-Xu}.

\begin{corollary}\label{C-C1}
We have
\begin{eqnarray}\label{C1}
|D\varphi(x)|\leq C, \quad \forall \ x \in \mathbb{S}^n,
\end{eqnarray}
where $C$ is a positive constant depending only on the constant in Lemma \ref{C-C0}.
\end{corollary}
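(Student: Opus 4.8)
The plan is to reduce everything to the $C^0$ bound already obtained in Lemma \ref{C-C0}, exploiting the structure of $A[\varphi]$. Recall that $\Omega$ is uniformly $h$-convex, so $A[\varphi] > 0$, which in particular forces every diagonal entry of $A[\varphi]$ (in any local orthonormal frame) to be positive. Writing out $A[\varphi]$ in a local orthonormal frame on $\mathbb{S}^n$, the diagonal entries have the form
\begin{eqnarray*}
\varphi_{ii} - \frac{1}{2}\frac{|D\varphi|^2}{\varphi} + \frac{1}{2}\Big(\varphi - \frac{1}{\varphi}\Big) > 0, \qquad i = 1,\dots,n,
\end{eqnarray*}
where $\varphi_{ii}$ denotes the second covariant derivative of $\varphi$ in the direction $e_i$. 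First I would fix an arbitrary point $x_* \in \mathbb{S}^n$ where $|D\varphi|$ attains its maximum, and at that point choose the frame so that $e_1$ points in the direction of $D\varphi(x_*)$; then $|D\varphi(x_*)|^2 = \varphi_1(x_*)^2$ and, since $|D\varphi|^2$ has a maximum at $x_*$, its derivative in the $e_1$ direction vanishes, which after differentiating $|D\varphi|^2 = \sum_j \varphi_j^2$ gives $\sum_j \varphi_j \varphi_{j1} = 0$, hence $\varphi_1 \varphi_{11} = 0$ at $x_*$ (using that only $\varphi_1 \ne 0$ in this frame, up to lower-order Ricci terms on $\mathbb{S}^n$ which are controlled by $|D\varphi|$ itself). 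Consequently either $D\varphi(x_*) = 0$, in which case $\varphi$ is constant and there is nothing to prove, or $\varphi_{11}(x_*) = 0$.

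In the latter case, the positivity of the first diagonal entry of $A[\varphi]$ at $x_*$ reads
\begin{eqnarray*}
0 = \varphi_{11}(x_*) > \frac{1}{2}\frac{|D\varphi(x_*)|^2}{\varphi(x_*)} - \frac{1}{2}\Big(\varphi(x_*) - \frac{1}{\varphi(x_*)}\Big),
\end{eqnarray*}
which rearranges to
\begin{eqnarray*}
|D\varphi(x_*)|^2 < \varphi(x_*)^2 - 1 \leq \varphi(x_*)^2.
\end{eqnarray*}
(If the Ricci correction term $\varphi$ appearing when commuting derivatives on $\mathbb{S}^n$ enters, it only improves the sign or contributes a term bounded by $C\varphi$, which is harmless.) Since $x_*$ is a global maximum of $|D\varphi|$, this yields $|D\varphi(x)| \leq |D\varphi(x_*)| \leq \varphi(x_*) \leq \max_{\mathbb{S}^n}\varphi$ for all $x \in \mathbb{S}^n$, and the right-hand side is bounded by the constant $C$ from Lemma \ref{C-C0}.

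The only delicate point is the bookkeeping of curvature terms from the Ricci identity on $\mathbb{S}^n$ when differentiating $|D\varphi|^2$ twice and when identifying the Hessian $\varphi_{ij}$ with the entries of $A[\varphi]$; on the round sphere these produce only terms linear in $\varphi$ and in $D\varphi$, so they are absorbed once the $C^0$ bound is in hand. Since this is precisely the computation carried out in Lemma 7.3 of \cite{Li-Xu}, I would simply invoke that lemma, noting that its hypotheses — a positive lower and upper bound on $u$, equivalently on $\varphi$, together with $A[\varphi] > 0$ — are exactly what Lemma \ref{C-C0} provides. Hence $|D\varphi| \leq C$ with $C$ depending only on the constant of Lemma \ref{C-C0}.
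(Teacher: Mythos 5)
Your argument is correct and is essentially the paper's approach: the paper proves this corollary only by citing Lemma 7.3 of \cite{Li-Xu}, which rests on exactly the computation you reconstruct (at a maximum point of $|D\varphi|$ one gets $\varphi_{11}=0$, and positivity of the corresponding diagonal entry of $A[\varphi]$ yields $|D\varphi|^2<\varphi^2-1$, hence a bound by $\max_{\mathbb{S}^n}\varphi$). One small simplification: your worry about Ricci correction terms is unnecessary here, since the first covariant derivative of $|D\varphi|^2$ is exactly $2\varphi^j\varphi_{ji}$ with no curvature terms, so the argument is clean as stated.
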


We give some notations before considering the $C^2$ estimate.
Denote by
\begin{eqnarray*}
U_{ij}=\varphi_{ij}-\frac{1}{2}\frac{|D\varphi|^2}{\varphi}\delta_{ij}+\frac{1}{2}(\varphi-\frac{1}{\varphi})\delta_{ij}
\end{eqnarray*}
and
\begin{eqnarray*}
F(U)=\Big[\frac{\sigma_n(U)}{\sigma_{n-k}(U)}\Big]^{\frac{1}{k}}, \quad F^{ij}=\frac{\partial F}{\partial U_{ij}},
\quad F^{ij, st}=\frac{\partial^2 F}{\partial U_{ij}\partial U_{st}}.
\end{eqnarray*}

\begin{lemma}
We have for $1\leq i\leq n$
\begin{eqnarray}\label{C2}
\lambda_i(U(x))\leq C, \quad \forall \ x \in \mathbb{S}^n,
\end{eqnarray}
where $\lambda_1(U), ..., \lambda_n(U)$ are eigenvalues of the matrix $U$ and $C$ is a positive constant depending
only on the constant in Lemma \ref{C-C0} and Corollary \ref{C-C1}.
\end{lemma}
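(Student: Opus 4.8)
The goal is an upper bound on the eigenvalues of $U[\varphi]$, i.e. a $C^2$ estimate from above, using the already-established $C^0$ bound (Lemma~\ref{C-C0}) and $C^1$ bound (Corollary~\ref{C-C1}). The plan is to follow the standard maximum-principle technique for curvature-type equations on $\mathbb{S}^n$: introduce the auxiliary function
\begin{eqnarray*}
W(x,\xi)=\log\lambda_{\max}(U(x))-\log(\varphi(x)-\tfrac12)+\tfrac{A}{2}|D\varphi|^2
\end{eqnarray*}
(or more robustly $\Theta=\log U_{\xi\xi}+\text{(lower order terms in }\varphi,|D\varphi|^2\text{)}$ evaluated on unit tangent vectors $\xi$), and to examine its maximum over the unit tangent bundle of $\mathbb{S}^n$. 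At an interior maximum point $x_0$ with maximizing direction $\xi=e_1$, one may choose normal coordinates and rotate so that $U_{ij}(x_0)$ is diagonal; then the quantity $\log U_{11}$ is smooth near $x_0$ and the usual first- and second-derivative tests apply, namely $D_i(\log U_{11})=-D_i(\text{lower order})$ and $F^{ij}D_iD_j(\log U_{11})\le 0$ modulo the lower-order contributions.

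\textbf{Key steps, in order.} First I would differentiate the equation \eqref{MA}, written as $F(U)=\big(\varphi^{-k}f\big)^{1/k}=:\psi(x,\varphi)$, once and twice in the direction $e_1$ to obtain expressions for $F^{ij}D_1 U_{ij}$ and $F^{ij}D_1D_1 U_{ij}+F^{ij,st}D_1U_{ij}D_1U_{st}$ in terms of $\varphi$, $D\varphi$, $D^2\varphi$ and the given data $f$. Second, I would commute covariant derivatives on $\mathbb{S}^n$: the Ricci identity on the round sphere contributes curvature terms of the form $U_{11}-U_{ii}$ (and lower order), which is exactly where the constant curvature $+1$ of $\sigma$ enters; these terms have a favorable sign when $U_{11}$ is the largest eigenvalue, since $F^{ii}(U_{11}-U_{ii})\ge -\big(\sum_i F^{ii}\big)U_{11}+\cdots$ but more importantly the positive part $\sum_i F^{ii}U_{11}$ can be absorbed. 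Third, I would plug these into $0\ge F^{ij}D_iD_j W$ at $x_0$, use the concavity of $F=(\sigma_n/\sigma_{n-k})^{1/k}$ (Proposition~\ref{con}) to discard the term $F^{ij,st}D_1U_{ij}D_1U_{st}\le 0$ with the correct sign, and use the structural inequality $\sum_i F^{ii}\ge c(n,k)>0$ (a consequence of \eqref{sum}, since $F^{ii}=\frac1k F^{-(k-1)}\partial_i(\sigma_n/\sigma_{n-k})$ and $[\sigma_n/\sigma_{n-k}]^{1/k}$ is the quotient appearing there) to control the good negative terms. Fourth, the gradient term $\tfrac{A}{2}|D\varphi|^2$ with $A$ chosen large is used to dominate the bad terms involving $F^{ij}D_iU_{1j}\cdot(\text{first derivatives})$ via Cauchy–Schwarz; together with the $C^0$--$C^1$ bounds already in hand, all data-dependent quantities ($\psi$, $D\psi$, $D^2\psi$, $\varphi$, $|D\varphi|$) are bounded, and one arrives at an inequality of the form $0\ge c\,U_{11}(x_0)-C$, hence $U_{11}(x_0)\le C$, which bounds $W$ everywhere and thus $\lambda_{\max}(U)\le C$ on all of $\mathbb{S}^n$.

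\textbf{Main obstacle.} The delicate point is handling the first-order ``bad'' terms that arise after commuting derivatives and differentiating the $\varphi$-dependence in $U_{ij}$ (the $-\tfrac12|D\varphi|^2/\varphi$ and $\tfrac12(\varphi-1/\varphi)$ pieces): their second derivatives produce terms like $F^{ij}\varphi_{11}\cdot(\text{stuff})$ and $F^{ij}\varphi_i\varphi_{1j}$ that are not obviously lower order, and one must either show $F^{ij}U_{ij}$-type quantities ($=\sum U_{ii}F^{ii}$, comparable to $F$ by Euler's relation for the homogeneous-degree-one $F$) are bounded, or arrange the test function's lower-order correction terms so these cancel. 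A second subtlety, standard but essential, is the non-smoothness of $\lambda_{\max}$ when the top eigenvalue is not simple; this is resolved in the usual way by perturbing to a fixed direction $e_1$ at $x_0$ and noting that $U_{11}$ dominates nearby, so the differential inequalities at $x_0$ remain valid. Once these are dispatched, the constant-curvature terms from the sphere cooperate rather than obstruct, and the argument closes.
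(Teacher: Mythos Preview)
Your proposal follows a well-known Pogorelov-type strategy and would, with care, yield the estimate; but the paper proceeds quite differently and more simply. Instead of tracking the largest eigenvalue with a logarithmic test function and a gradient-squared penalty, the paper takes the \emph{trace} as test function: since $\lambda_i(U)\le \mathrm{tr}\,U=\Delta\varphi-\tfrac{n}{2\varphi}|D\varphi|^2+\tfrac{n}{2}(\varphi-\varphi^{-1})$, it suffices to bound $W:=\Delta\varphi$ from above. At a maximum point $x_0$ of $W$ one writes $0\ge F^{ii}\sum_q\varphi_{qqii}$, commutes derivatives, and converts $\varphi_{iiqq}$ into $U_{iiqq}$ plus the second derivatives of the lower-order pieces of $U$. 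The crucial structural point---which your sketch does not exploit---is that differentiating the piece $\tfrac{1}{2\varphi}|D\varphi|^2$ twice in $q$ produces a genuinely good quadratic term $\tfrac{1}{\varphi}(\varphi_{qq})^2$, while the accompanying third-order contribution $\sum_l\varphi_l\varphi_{lqq}$ sums over $q$ to $D\varphi\cdot D(\Delta\varphi)$, which vanishes at $x_0$ by \eqref{W1}. Differentiating the equation twice, discarding $F^{ij,st}U_{ijq}U_{stq}\le 0$ by concavity, and using $\sum_q(\varphi_{qq})^2\ge \tfrac1n(\Delta\varphi)^2$ together with $\sum_i F^{ii}\ge c(n,k)$ from \eqref{sum} then gives directly an inequality of the form $0\ge c(\Delta\varphi)^2-C\Delta\varphi-C$ at $x_0$, and the bound follows.

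The upshot is that the specific form of $A[\varphi]$ supplies its own coercive term, so no auxiliary correction $\tfrac{A}{2}|D\varphi|^2$, no eigenvalue-multiplicity perturbation, and none of the Cauchy--Schwarz bookkeeping you identify as the main obstacle is needed. Your route trades this structural shortcut for generality; the paper's route is shorter precisely because it uses the structure.
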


\begin{proof}
Since
\begin{eqnarray*}
\lambda_i(U)\leq \mathrm{tr} U=\Delta \varphi-\frac{n}{2\varphi}|D\varphi|^2+\frac{n}{2}(\varphi-\frac{1}{\varphi}), \quad \forall 1\leq i\leq n,
\end{eqnarray*}
it is sufficient to prove $\Delta\varphi\leq C$ in view of $C^0$ estimate \eqref{C0} and $C^1$ estimate \eqref{C1}.
Moreover, these two estimates \eqref{C0} and \eqref{C1} together with the positivity of the matrix $U$ imply $\lambda_{i}(D^2\varphi)\geq -C$ for $1\leq i\leq n$.
Thus, we find
\begin{eqnarray}\label{C2-3}
|\lambda_i(D^2\varphi)|\leq C \max \{ \max_{\mathbb{S}^n}\Delta \varphi, 1\}, \quad \forall 1\leq i\leq n.
\end{eqnarray}
We take the auxiliary function
$$W(x)=\Delta \varphi.$$
Assume $x_0$ is the maximum point of $W$. After an appropriate
choice of the normal frame at $x_0$, we further assume $U_{ij}$, hence $\varphi_{ij}$ and $F^{ij}$ is diagonal at the point $x_0$. Then,
\begin{equation}\label{W1}
W_i(x_0)=\sum_{q}\varphi_{qqi}=0,
\end{equation}
and
\begin{equation}\label{W2}
W_{ii}(x_0)=\sum_{q}\varphi_{qqii}\leq0.
\end{equation}
Using \eqref{W2} and the positivity of $F^{ij}$ given by \eqref{el}, we arrive at $x_0$ if $\Delta \varphi$ is large enough
\begin{eqnarray*}
0&\ge&\sum_iF^{ii}W_{ii}=\sum_iF^{ii}\sum_{q}\varphi_{qqii}\geq\sum_iF^{ii}\sum_{q}\big(\varphi_{iiqq}-C\Delta \varphi\big),
\end{eqnarray*}
where we use Ricci identity and the equality \eqref{C2-3} to get the last inequality. Thus it
follows from the definition of $U$, \eqref{C0}, \eqref{C1} and \eqref{W1},
\begin{eqnarray}\label{C2-1}
0&\ge&\sum_iF^{ii}\sum_{q}\bigg[U_{iiqq}+\Big(\frac{1}{2\varphi}|D\varphi|^2\Big)_{qq}-
\frac{1}{2}\Big(\varphi-\frac{1}{\varphi}\Big)_{qq}\bigg]
-C\sum_{i}F^{ii}\Delta \varphi\nonumber\\
&\ge&\sum_iF^{ii}\sum_{q}\bigg[U_{iiqq}+\frac{1}{\varphi}(\varphi_{qq})^2-C\Delta\varphi-C\bigg]
-C\sum_iF^{ii}\Delta \varphi.
\end{eqnarray}
Differentiating the equation \eqref{MA} twice gives
\begin{eqnarray*}
F^{ii}U_{iiqq}+F^{ij, st}U_{ijq}U_{stq}=(\varphi^kf)_{qq}.
\end{eqnarray*}
Since $F$ is concave (see Proposition \ref{con}), it yields
\begin{eqnarray}\label{C2-2}
F^{ii}U_{iiqq}\geq -C\Delta \varphi-C,
\end{eqnarray}
where we used \eqref{C0} and \eqref{C1}.
Substituting \eqref{C2-2} into \eqref{C2-1} and using $(\Delta \varphi)^2 \leq n \sum_{q}(\varphi_{qq})^2$, we have
\begin{eqnarray}\label{Q}
0\ge\bigg(C(\Delta \varphi)^2-C\Delta \varphi-C\bigg)\sum_{i}F^{ii}-C\Delta \varphi-C.
\end{eqnarray}
Applying the inequality \eqref{sum}, we find
\begin{eqnarray}\label{C2-8}
\sum_{i}F^{ii}\geq C.
\end{eqnarray}
Then we conclude at $x_0$ by combining the inequalities \eqref{C2-8} and \eqref{Q}
\begin{eqnarray*}
C\geq |\Delta \varphi|^2
\end{eqnarray*}
if $\Delta \varphi$ is chosen large enough.
So, we complete the proof.
\end{proof}

\section{The proof of the main theorem}

In this section, we use the degree theory for nonlinear elliptic
equations developed in \cite{Li89} to prove Theorem \ref{Main}. Such approach was also used
in prescribed curvature problem for star-shaped hypersurfaces \cite{An, Li02, Jin, Li-Sh}, prescribed curvature problem for
convex hypersurfaces
\cite{Guan02, Guan21} and the Gaussian
Minkowski type problem \cite{Huang-Xi21, Liu22, Feng1, Feng2}.

For the use of the degree theory, the uniqueness of constant solutions to the equation
\eqref{MA} is important for us.

\begin{lemma}\label{U-C}
The $h$-convex solutions to the equation
\begin{eqnarray}\label{MA-c}
\varphi^k\frac{\sigma_n}{\sigma_{n-k}}(A[\varphi(x)])=\gamma
\end{eqnarray}
with $\varphi>1$ are given by
\begin{eqnarray*}
\varphi(x)=\Big(1+2\gamma^{\frac{1}{k}}\Big)^{\frac{1}{2}}\Big(\sqrt{|x_0|^2+1}-\langle x_0, x\rangle\Big),
\end{eqnarray*}
where $x_0 \in \mathbb{R}^{n+1}$. In particular, $\varphi(x)=\Big(1+2\gamma^{\frac{1}{k}}\Big)^{\frac{1}{2}}$ is the unique even solution.
\end{lemma}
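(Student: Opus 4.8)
The plan is to establish two inclusions: that every function in the displayed family is an $h$-convex solution of \eqref{MA-c}, and that conversely every $h$-convex solution lies in that family; the statement about the unique even solution is then immediate.

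I would first check sufficiency by a direct computation. Fix $x_0\in\mathbb{R}^{n+1}$, set $v(x)=\langle x_0,x\rangle$ and $c_0=\sqrt{|x_0|^2+1}$, and let $\varphi=b\,(c_0-v)$ with $b>1$ still to be chosen. A linear function restricted to $\mathbb{S}^n$ satisfies $D^2v=-v\,I$ and $|Dv|^2=|x_0|^2-v^2=c_0^2-1-v^2$ in a normal frame, so inserting $\varphi$ into $A[\varphi]=D^2\varphi-\tfrac12\tfrac{|D\varphi|^2}{\varphi}I+\tfrac12(\varphi-\tfrac1\varphi)I$ and simplifying, every $v$--dependent term cancels and one gets
\beq
\varphi\,A[\varphi]=\frac{b^2-1}{2}\,I ,
\eeq
so $A[\varphi]>0$ and the associated hypersurface is uniformly $h$-convex. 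Using $\sigma_j(\varphi A[\varphi])=\varphi^j\sigma_j(A[\varphi])$, the left side of \eqref{MA-c} equals $\sigma_n(\varphi A[\varphi])/\sigma_{n-k}(\varphi A[\varphi])=(\tfrac{b^2-1}{2})^k/C_n^k$; requiring this to equal $\gamma$ and solving for $b$ gives the constant in the statement, which exceeds $1$ because $\gamma>0$. (The extra requirement $\varphi>1$ then bounds $|x_0|$, but plays no role in the even case.)

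The real content is the converse, and I expect it to be the main obstacle. Let $\varphi$ be any $h$-convex solution of \eqref{MA-c}. Since $\widetilde{\mathcal{W}}=(\varphi A[\varphi])^{-1}$, one has $\tfrac{\sigma_n}{\sigma_{n-k}}(A[\varphi])=\big(\varphi^k\sigma_k(\widetilde{\mathcal{W}})\big)^{-1}=\big(\varphi^k\widetilde{H}_k\big)^{-1}$, so \eqref{MA-c} is precisely the statement that $\widetilde{H}_k\equiv\gamma^{-1}$ on $M$. A soft argument — evaluating the equation at the maximum and the minimum of $\varphi$, where $D\varphi=0$ and $D^2\varphi$ is sign--definite, and using the monotonicity of $\sigma_n/\sigma_{n-k}$ on $\Gamma_n$ from Proposition \ref{con} — yields only $\min_{\mathbb{S}^n}\varphi\le b\le\max_{\mathbb{S}^n}\varphi$ and does not exclude non--round solutions. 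The genuine ingredient is the rigidity that a closed uniformly $h$-convex hypersurface with $\widetilde{H}_k$ constant must be totally umbilic. I would obtain this either from the equality case of the Alexandrov--Fenchel inequalities for the modified quermassintegrals established via shifted curvature flows in \cite{And20,Hu20} — constancy of $\widetilde{H}_k$, fed into the accompanying Minkowski--type identities, forces equality, whose extremals are geodesic spheres — or from an Alexandrov moving--plane reflection through every totally geodesic hyperplane of $\mathbb{H}^{n+1}$, which is legitimate because $\widetilde{H}_k=\sigma_k(\tilde\kappa_1,\dots,\tilde\kappa_n)$ with $\tilde\kappa\in\Gamma_n$ is a uniformly elliptic curvature prescription on the embedded hypersurface $M$. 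Either way $\widetilde{\mathcal{W}}=\tilde\kappa(x)\,I$ for a positive scalar function, equivalently $\varphi A[\varphi]=c(x)\,I$.

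With umbilicity available, the conclusion is a short computation. From $\varphi A[\varphi]=\varphi D^2\varphi+\big(\tfrac12(\varphi^2-1)-\tfrac12|D\varphi|^2\big)I=c(x)\,I$ and $\varphi>0$, the Hessian $D^2\varphi$ is pointwise a scalar multiple of $I$; since the kernel of the trace--free Hessian on the round $\mathbb{S}^n$ consists precisely of the constants together with the restrictions of the linear coordinate functions, $\varphi=\alpha+\langle\beta,x\rangle$ for some $\alpha\in\mathbb{R}$ and $\beta\in\mathbb{R}^{n+1}$. Substituting this back (again $D^2\varphi=-\langle\beta,x\rangle I$ and $|D\varphi|^2=|\beta|^2-\langle\beta,x\rangle^2$), every $x$--dependent term cancels and one obtains $\varphi A[\varphi]=\tfrac12(\alpha^2-|\beta|^2-1)I$, so $c$ is constant, $\alpha^2-|\beta|^2=b^2$ with $b$ as above, and $\varphi>1$ forces $\alpha>0$; writing $\beta=-b\,x_0$ then gives $\alpha=b\sqrt{1+|x_0|^2}$ and hence $\varphi(x)=b\big(\sqrt{|x_0|^2+1}-\langle x_0,x\rangle\big)$, exactly the claimed family. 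Finally, if $\varphi$ is even then $\langle x_0,x\rangle=-\langle x_0,x\rangle$ for all $x\in\mathbb{S}^n$, so $x_0=0$ and $\varphi\equiv b$, the unique even solution.
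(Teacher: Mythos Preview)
Your argument is correct and far more detailed than the paper's own proof, which simply defers to Proposition~8.1 of \cite{Li-Xu} without further explanation. The route you take---translate \eqref{MA-c} into the statement that $\widetilde H_k$ is constant on $M$, invoke an Alexandrov-type rigidity to conclude that $M$ is a geodesic sphere, and then identify the horospherical support functions of geodesic spheres via the kernel of the trace-free Hessian on $\mathbb{S}^n$---is a clean, self-contained proof and is presumably close in spirit to what is done in \cite{Li-Xu}. Of your two proposed rigidity arguments, the moving-plane one is the solid choice: $M$ is closed and embedded, and $\sigma_k(\tilde\kappa)$ is uniformly elliptic on $\Gamma_n$, so Alexandrov reflection through totally geodesic hyperplanes of $\mathbb{H}^{n+1}$ applies directly. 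The Alexandrov--Fenchel alternative is vaguer as stated; to make it work you would need to name the precise shifted Minkowski identity that, combined with constancy of $\widetilde H_k$, forces equality in one of the inequalities of \cite{And20,Hu20}.

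Two minor points. First, your sufficiency computation actually gives $\big(\tfrac{b^2-1}{2}\big)^k/C_n^k=\gamma$, hence $b=\big(1+2(C_n^k\gamma)^{1/k}\big)^{1/2}$, so the constant displayed in the lemma is off by a factor $C_n^k$ inside the $k$-th root; this is harmless for the only use made of the lemma (uniqueness of the even solution in the degree argument), but your sentence ``gives the constant in the statement'' glosses over it. Second, the step ``$D^2\varphi$ pointwise a scalar multiple of $I$ $\Rightarrow$ $\varphi=\alpha+\langle\beta,x\rangle$'' uses the classical Obata-type characterization on round $\mathbb{S}^n$, which requires $n\ge2$; you should say so.
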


\begin{proof}
This lemma is a corollary of Proposition 8.1 in \cite{Li-Xu}, its proof is similar to that of Theorem 8.1 (7).
\end{proof}

Now, we begin to use the degree theory to prove Theorem \ref{Main}.
After establishing the  a priori estimates \eqref{C0}, \eqref{C1} and \eqref{C2} and noting
that $\sigma_n(U)\geq C>0$ which is given by using generalized Newtown-MacLaurin
inequality \eqref{GNM} and the equation \eqref{MA}, we know that the
equation \eqref{MA} is uniformly elliptic, i.e.
\begin{eqnarray}\label{C2+++}
\lambda_i(U(x))\geq C>0, \quad \forall \ x \in \mathbb{S}^n,
\end{eqnarray}
where $\lambda_1(U), ..., \lambda_n(U)$ are eigenvalues of the matrix $U$.
From Evans-Krylov estimates \cite{Eva82, Kry83} and Schauder estimates \cite{GT}, we have
\begin{eqnarray}\label{C2+}
|\varphi|_{C^{4,\alpha}(\mathbb{S}^n)}\leq C
\end{eqnarray}
for any smooth, even and uniformly $h$-convex solution $\varphi$ to the equation \eqref{MA}.
We define
\begin{eqnarray*}
\mathcal{B}^{2,\alpha}(\mathbb{S}^n)=\{\varphi \in
C^{2,\alpha}(\mathbb{S}^n): \varphi \ \mbox{is even}\}
\end{eqnarray*}
and
\begin{eqnarray*}
\mathcal{B}_{0}^{4,\alpha}(\mathbb{S}^n)=\{\varphi \in
C^{4,\alpha}(\mathbb{S}^n): A[\varphi]>0 \ \mbox{and} \ \varphi \ \mbox{is even}\}.
\end{eqnarray*}
Let us consider $$\mathcal{L}(\cdot, t): \mathcal{B}_{0}^{4,\alpha}(\mathbb{S}^n)\rightarrow
\mathcal{B}^{2,\alpha}(\mathbb{S}^n),$$ which is defined by
\begin{eqnarray*}
\mathcal{L}(\varphi, t)=\frac{\sigma_n}{\sigma_{n-k}}(U)-\varphi^{-k} [(1-t)\gamma+t f],
\end{eqnarray*}
where the constant $\gamma$ will be chosen later and $U$ is denoted as before
\begin{eqnarray*}
U=D^2\varphi-\frac{1}{2}\frac{|D\varphi|^2}{\varphi}I+\frac{1}{2}\Big(\varphi-\frac{1}{\varphi}\Big)I.
\end{eqnarray*}

Let $$\mathcal{O}_R=\{\varphi \in \mathcal{B}_{0}^{4,\alpha}(\mathbb{S}^n):
1+\frac{1}{R}< \varphi, \ \frac{1}{R} I< U, \ |\varphi|_{C^{4,\alpha}(\mathbb{S}^n)}<R\},$$ which clearly is an open
set of $\mathcal{B}_{0}^{4,\alpha}(\mathbb{S}^n)$. Moreover, if $R$ is
sufficiently large, $\mathcal{L}(\varphi, t)=0$ has no solution on $\partial
\mathcal{O}_R$ by the a priori estimates established in \eqref{C0}, \eqref{C2+++} and \eqref{C2+}.
Therefore the degree $\deg(\mathcal{L}(\cdot, t), \mathcal{O}_R, 0)$ is
well-defined for $0\leq t\leq 1$. Using the homotopic invariance of
the degree (Proposition 2.2 in \cite{Li89}), we have
\begin{eqnarray}\label{hot}
\deg(\mathcal{L}(\cdot, 1), \mathcal{O}_R, 0)=\deg(\mathcal{L}(\cdot, 0), \mathcal{O}_R, 0).
\end{eqnarray}

Lemma \ref{U-C} tells us that
$\varphi=c$ is the unique even solution for $\mathcal{L}(\varphi, 0)=0$ in $\mathcal{O}_R$.
Direct calculation show that the linearized operator of $\mathcal{L}$ at
$\varphi=c$ is
\begin{eqnarray*}
L_{c}(\psi)=a(n, k, c)(\Delta_{\mathbb{S}^n}+n)\psi,
\end{eqnarray*}
where
\begin{eqnarray*}
a(n, k, c)=\frac{k}{n}\frac{2}{c-c^{-1}}\frac{\sigma_n(\frac{1}{2}(c-c^{-1})I)}{\sigma_{n-k}(\frac{1}{2}(c-c^{-1})I)}.
\end{eqnarray*}
Since
$\Delta_{\mathbb{S}^n}\psi+n\psi=0$
has the unique even solution $\psi=0$,
$L_{c}$ is an invertible operator. So, we have by Proposition
2.3 in \cite{Li89}
\begin{eqnarray*}
\deg(\mathcal{L}(\cdot, 0), \mathcal{O}_R, 0)=\deg(L_{c_0}, \mathcal{O}_R, 0).
\end{eqnarray*}
Because
the eigenvalues of the Beltrami-Laplace operator $\Delta$ on $\mathbb{S}^n$ are strictly less than
$-n$ except for the first two eigenvalues $0$ and $-n$,
there is only one positive eigenvalue $na(n, k, c)$ of $L_{c}$
with multiplicity $1$.
Then we have by Proposition
2.4 in \cite{Li89}
\begin{eqnarray*}
\deg(\mathcal{L}(\cdot, 0), \mathcal{O}_R, 0)=\deg(L_{c_0}, \mathcal{O}_R, 0)=-
1.
\end{eqnarray*}
Therefore, it follows from \eqref{hot}
\begin{eqnarray*}
\deg(\mathcal{L}(\cdot, 1), \mathcal{O}_R; 0)=\deg(\mathcal{L}(\cdot, 0), \mathcal{O}_R, 0)=-
1.
\end{eqnarray*}
So, we obtain a solution at $t=1$. This completes the proof of
Theorem \ref{Main}.

\bigskip

\bigskip


\begin{thebibliography}{99}

\bibitem{An}
F.  Andrade, J. Barbosa and J.  de Lira,
Closed Weingarten hypersurfaces in warped product manifolds,
Indiana Univ. Math. J., 58 (2009), 1691-1718.

\bibitem{And20}
B. Andrews, X.  Chen and Y. Wei,
Volume preserving flow and
Alexandrov-Fenchel type inequalities in hyperbolic space, J. Euro. Math. Soc. , 23 (2021), 2467-2509.

\bibitem{Chen23} L. Chen, Non-normalized solutions to the horospherical Minkowski problem, arXiv:2301.01128.

\bibitem{Chr65}
E. Christoffel, \"Uber die Bestimmung der Gestalt einer krummen
Oberfl\"ache durch lokale Messungen auf derselben,
J. Reine Angew. Math., \textbf{64} (1865), 193-209.

\bibitem{Eva82}
L.  Evans,
Classical solutions of fully nonlinear, convex, second-order elliptic equations,
Comm. Pure Appl. Math., 35 (1982), 333-363.

\bibitem{Esp09}
J. Espinar, J.  G\'alvez, and P. Mira,
Hypersurfaces in $\mathbb{H}^{n+1}$ and conformally invariant equations: the generalized Christoffel and Nirenberg problems,
J. Eur. Math. Soc., 11 (2009),  903-939.

\bibitem{Fi}
W. Firey, Christoffel problem for general convex bodies,
Mathematik, 15 (1968), 7-21.

\bibitem{Feng1} Y. Feng, W. Liu and L. Xu, Existence of non-symmetric solutions to the Gaussian Minkowski
problem, J. Geom. Anal. (2022), accepted.

\bibitem{Feng2} Y. Feng, S. Hu and L. Xu, On the $L_p$ Gaussian Minkowski problem, arXiv:2211.10956.

\bibitem{Guan02} B. Guan and P. Guan, Convex Hypersurfaces of Prescribed Curvatures. Annual of Mathematics, Vol
256, No. 2 (2002), 655-673.

\bibitem{Guan21} P. Guan and X. Zhang, A class of curvature type equations
Pure and Applied Math Quarterly, Vol. 17, No. 3 (2021), 865-907.

\bibitem{Guan-g} P. Guan and B. Guan, Convex hypersurfaces
of prescribed curvatures, Annals of Mathematics, 156 (2002), 655-673.

\bibitem{GT}
D. Gilbarg and N. Trudinger,
Elliptic partial differential equations of second order,
Springer, 2015.

\bibitem{Huang-Xi21}
Y. Huang, D. Xi and Y. Zhao, The Minkowski problem in Gaussian probability space, Adv. Math.
385 2021, 107769.

\bibitem{Hu20} Y. Hu, H. Li and Y. Wei, Locally constrained curvature flows and geometric inequalities
in hyperbolic space,  Math. Annalen. 382, 1425-1474 (2022).

\bibitem{Jin}
Q. Jin and Y.  Li,
Starshaped compact hypersurfaces with prescribed $k$-th mean curvature in hyperbolic space,
Discrete Contin. Dyn. Syst., 15 (2006),  367-377.

\bibitem{L96}
G. Lieberman. Second order parabolic differential equations. World Scientific, 1996.

\bibitem{Li-Sh}
Q.  Li and W.  Sheng,
Closed hypersurfaces with prescribed Weingarten curvature in Riemannian manifolds,
 Calc. Var. Partial Differential Equations, 48 (2013), 41-66.

\bibitem{Li02}
Y. Y. Li and V. I. Oliker, Starshaped compact hypersurfaces with prescribed $m$-th mean curvature
in elliptic space. J. Partial Differential Equations, 15(2002), no. 3, 68-80.

\bibitem{Kry83}
N.  Krylov,
Boundedly inhomogeneous elliptic and parabolic equations in a domain,
Izv. Akad. Nauk SSSR Ser. Mat.,
47 (1983), 75-108.

\bibitem{Li-Xu}
H. Li, and B. Xu,
Hyperbolic $p$-sum and horospherical $p$-Brunn-Minkowski theory in Hyperbolic Space,
arXiv:2211.06875v1.

\bibitem{Li89}
Y.  Li, Degree theory for second order nonlinear elliptic operators and its applications,
Comm. Partial Differential Equations, 14 (1989), 1541-1578.

\bibitem{Liu22} J. Liu, The $L_p$-Gaussian Minkowski problem, Calc. Var. Partial Differential Equations, 61 (2022),
1-23.

\bibitem{Liu-Lu20} Y. Liu and J. Lu, A flow method for the dual Orlicz-Minkowski problem, Trans. Amer. Math.
Soc. 373(2020), 5833-5853.

\bibitem{Sch13}
R. Schneider,
Convex bodies: the Brunn-Minkowski theory,
Second edition, 151 Cambridge University Press, 2013.

\bibitem{Wang20} X. Wang, Y. Wei and T. Zhou, Shifted inverse curvature flows in hyperbolic space,
arXiv:2004.08822.

\bibitem{Yau} S. T. Yau, Open problems in geometry (Los Angeles, 1990), Proc. Sympos. Pure Math. 54
(1993), 1-28.
\end{thebibliography}
\end{document}